\documentclass[a4paper,11pt]{amsart}

\providecommand{\MR}{\relax\ifhmode\unskip\space\fi MR }

\providecommand{\href}[2]{#2}

\usepackage{enumerate, booktabs, mathrsfs}
\usepackage{amsmath, amsfonts, amssymb, amsthm}
\usepackage[all]{xy}
\usepackage[height=22.5cm, width=15.5cm]{geometry}
\usepackage[active]{srcltx}
\usepackage{color}  
\numberwithin{equation}{section}
\newtheorem{thm}{Theorem}[section]
\newtheorem{lemma}[thm]{Lemma}
\newtheorem{proposition}[thm]{Proposition}
\newtheorem{cor}[thm]{Corollary}
\theoremstyle{definition}
\newtheorem{definition}[thm]{Definition}
\newtheorem{remark}[thm]{Remark}

\numberwithin{equation}{section}

\author{Bruce Gilligan}
\address{Dept. of Mathematics \& Statistics, University of Regina, Regina, Canada S4S 0A2}
\email{gilligan@math.uregina.com}
\keywords{complex homogeneous manifold, plurisubharmonic exhaustion function, 
holomorphic reduction, Stein manifold, Remmert reduction, Hirschowitz annihilator}
\subjclass[2010]{Primary 32M10; Secondary 32U10, 32A10, 32Q28}

\date{\today}   

\begin{document}

\title[Levi's problem for complex homogeneous manifolds]
{Levi's problem for complex homogeneous manifolds}

   \dedicatory{To the memory of R. Remmert}    

\begin{abstract}    
Suppose $G$ is a connected complex Lie group and $H$ is a closed complex subgroup.  
Then there exists a closed complex subgroup $J$ of $G$ containing $H$ such that 
the fibration $\pi:G/H \to G/J$ is the holomorphic reduction of $G/H$, i.e., $G/J$ is holomorphically 
separable and ${\mathcal O}(G/H) \cong \pi^*{\mathcal O}(G/J)$.  
In this paper we prove that if $G/H$ is pseudoconvex, i.e.,    if      
$G/H$ admits a continuous plurisubharmonic exhaustion function,           
then $G/J$ is Stein and $J/H$ has no non--constant holomorphic functions.   
\end{abstract}

\thanks{This work was partially supported by an NSERC Discovery Grant.  
We thank K. Oeljeklaus for pointing out Proposition \ref{Karl}.        
We also thank A. Huckleberry for stimulating discussions concerning this paper, 
C. Miebach for his constructive criticisms, and the referee for some further suggestions. }

\maketitle

\section{Introduction}   \label{sect1}  

The original Levi problem dealt with the characterization of domains of holomorphy in $\mathbb C^n$ having smooth boundary  
in terms of conditions on that boundary.  
Grauert \cite{Gr58} (resp. Narasimhan \cite{Nar62}) showed that a complex manifold (resp. complex space)   
that admits a smooth (resp. continuous) strictly plurisubharmonic exhaustion function is Stein.  
However, Grauert also pointed out that there exist pseudoconvex domains in tori all of whose holomorphic functions are constant, e.g., see \cite{Nar63}.  
This shows that the Levi problem for pseudoconvex domains that are not strongly pseudoconvex fails, in general.      

\medskip 
In this paper we restrict our attention to complex homogeneous manifolds $X := G/H$ with $G$ a connected complex Lie group and $H$ a closed complex subgroup,      
and we seek conditions under which its holomorphic function algebra ${\mathcal O}(G/H)$ is Stein.   
Setting 
\[    
       J \; := \;  \{ \ g \in G \   | \  f(gH) \;  = \;  f(eH) \mbox{  for all  } f\in{\mathcal O}(G/H) \  \}   
\]           
yields a closed complex subgroup of $G$ containing $H$ and consequently one has   
the holomorphic fibration $\pi : G/H \to G/J, \; gH \mapsto gJ$   
which we call the \textbf{holomorphic reduction} of $G/H$   \cite{GH78}.  
By construction  one has ${\mathcal O}(G/H) \cong \pi^* {\mathcal O}(G/J)$.   
The question then reduces to considerations of when the holomorphically separable complex homogeneous manifold $G/J$ is Stein.        

\medskip  
Now it is known that holomorphic separability implies Steinness for complex Lie groups \cite{MM60}, 
for complex nilmanifolds \cite{GH78}, and for complex solvmanifolds \cite{HO86}.  
But the situation is much different when the group acting is semisimple or reductive.  
For example, $\mathbb C^n \setminus \{ 0 \}$ is not Stein whenever $n>1$.  
What is known is that $G/H$ is holomorphically separable for $G$ reductive implies $H$ is an 
algebraic subgroup of $G$ \cite{BO73} and such a $G/H$ is Stein if and only if $H$ is reductive \cite{Mat60} and \cite{On60}.  
A reductive complex Lie group is the complexification of a totally real maximal compact subgroup and the  
compactness of this subgroup is playing an essential role.  
An excellent survey of the function theory on $G/H$ for $G$ reductive can be found in \cite[Chapter 5]{Akh95} and we refer   
the interested reader to that book and the references listed therein.

\medskip   
The present paper analyzes the holomorphic function theory of \textbf{pseudoconvex} complex homogeneous manifolds,   
where a complex manifold is pseudoconvex if it admits a continuous plurisubharmonic exhaustion function.       
In \cite[Main Theorem]{GMO13} we proved that the base of the holomorphic reduction of any pseudoconvex complex   
homogeneous manifold $G/H$ is Stein and its fiber has no non--constant holomorphic functions if $G$ is solvable  (see also  \cite[Theorem 8.5]{GMO13}) 
or if $G$ is reductive (see also \cite[Theorem 7.5]{GMO13}).         
Here we prove that this result holds more generally for pseudoconvex homogeneous manifolds of mixed groups, where by a mixed group       
we mean that $G$ is a connected, simply connected, complex Lie group that 
has a Levi--Malcev decomposition $G = S \ltimes R$, where $R$ is the radical of $G$,    
$S$ is a maximal semisimple subgroup, and both $\dim R > 0$ and $\dim S > 0$.  

\begin{thm} \label{mnthm}  
Let $G$ be a connected complex Lie group and $H$ a closed complex subgroup of $G$ such 
that $G/H$ is pseudoconvex.  
Suppose $G/H \to G/J$ is the holomorphic reduction of $G/H$.  
Then $G/J$ is Stein and ${\mathcal O}(J/H) = \mathbb C$.  
\end{thm}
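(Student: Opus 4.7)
The plan is first to reduce to the case where $G$ is simply connected: replacing $G$ by its universal cover $\widetilde G$ and $H$ by the preimage $\widetilde H$ of $H$, one has a biholomorphism $\widetilde G/\widetilde H \cong G/H$, so both pseudoconvexity and the holomorphic reduction transfer. In the simply connected case $G$ admits a genuine Levi--Malcev decomposition $G = S\ltimes R$ with $R$ the radical and $S$ a maximal connected semisimple subgroup. If $R = \{e\}$ we are in the reductive (semisimple) setting handled by \cite[Theorem 7.5]{GMO13}, and if $S = \{e\}$ we are in the solvable setting handled by \cite[Theorem 8.5]{GMO13}. We may therefore assume the mixed case $\dim R > 0$ and $\dim S > 0$.

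Next I would study the radical fibration
\[
\pi_R : G/H \longrightarrow G/HR,
\]
whose base is a complex homogeneous manifold of the semisimple group $S$ (after identifying $G/HR \cong S/(S\cap HR)$, noting that closedness of $HR$ needs to be verified, possibly after replacing $H$ by its Zariski-type closure in $G$), and whose typical fibre is the solvable homogeneous manifold $HR/H \cong R/(R\cap H)$. A continuous plurisubharmonic exhaustion of $G/H$ pushes forward, via a fibrewise averaging/maximum construction (cf.\ the techniques of \cite{GMO13} for the component cases), to show that both the base and the fibre inherit pseudoconvexity in an appropriate sense. To these one can apply the known solvable and reductive results: the holomorphic reduction of the fibre $R/(R\cap H)$ has Stein base and the holomorphic reduction of the base $S/(S\cap HR)$ also has Stein base, with each fibre having only constant holomorphic functions.

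The core step is then to assemble these two reductions into the holomorphic reduction $G/H \to G/J$ of the total space. For this I would introduce an intermediate subgroup $J_0$ defined as the preimage in $G$ of the Hirschowitz annihilator of the base's holomorphic reduction, and show, using the functorial definition of $J$ together with the push-forward property $\pi_{R*}\mathcal O(G/H) \subset \mathcal O(G/HR)$, that $J$ coincides with the group obtained by combining the two partial annihilators. One then checks that $G/J$ is Stein (by the Stein-with-Stein-fibre/Stein-base principle applied to the induced fibration, or more concretely by Matsushima--Morimoto type arguments since the base of the combined reduction sits as a homogeneous manifold over a Stein base), and that $J/H$ fibres over spaces with only constant holomorphic functions.

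The main obstacle, as in all results of this kind, is precisely this assembly step: neither the subgroups $S\cap HR$ nor the various annihilators are \emph{a priori} closed, and pseudoconvexity behaves subtly under passage to non-proper fibrations. I expect the key technical work to be showing that the plurisubharmonic exhaustion on $G/H$ controls both the base and the fibre of $\pi_R$ simultaneously (so that no new non-constant holomorphic functions can appear on the fibre that fail to extend), and that the Hirschowitz annihilator mechanism correctly identifies $J$ from its analogues on base and fibre. Once this is in place, the Stein conclusion for $G/J$ follows from combining \cite[Theorems 7.5, 8.5]{GMO13} applied to the factors $S/(S\cap J)$ and $(J\cap R)/(H\cap R)$ respectively.
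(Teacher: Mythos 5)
Your outline correctly locates the problem in the mixed case of the Levi--Malcev decomposition and correctly predicts that the known solvable and reductive results from \cite{GMO13} must be combined, but the step you flag as ``the main obstacle'' --- the assembly --- is in fact the entire content of the paper's proof, and your proposal does not supply a mechanism for it. Two concrete gaps. First, the radical fibration $G/H \to G/HR$ does not exist in general because $HR$ need not be closed; the paper's substitute (Proposition \ref{Karl}, using the Zassenhaus lemma to enlarge $R$ to a closed solvable $\Gamma$--normalized group) only works for \emph{discrete} isotropy, and the paper earns discreteness by first fibering twice through normalizers ($G_1 = N_G(\mathcal A)$, $G_2 = N_{G_1}(H^0)$) and then passing to $\widehat G = G_3/H^0$. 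Moreover the paper never applies this to all of $G/H$: it applies it only to $\widetilde L/\Gamma$, where $\widetilde L$ is the smallest closed connected complex subgroup containing the closure of $\mathcal A\cdot\Gamma$, which is a much smaller and more controlled space (it already satisfies $\mathcal O(\widetilde L/\Gamma)=\mathbb C$ because $L/\Gamma$ is compact).

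Second, and more seriously, pseudoconvexity does not descend to the base of a fibration by any ``fibrewise averaging/maximum construction'': the maximum of a plurisubharmonic exhaustion over a non-compact fibre need not be finite or plurisubharmonic, and averaging preserves plurisubharmonicity only over compact group orbits. The only descent tool available is Kiselman's minimum principle, and to invoke it one must first prove that the exhaustion is constant on suitable compact totally real orbits. That is exactly what the Hirschowitz annihilator machinery delivers: plurisubharmonic functions are constant on $\mathcal A$--orbits (they lie in level sets), hence on the orbits of the closure $L$ and of its radical $R_L$; the paper then shows $R_L$ has compact totally real orbits, realizes $R_L^{\mathbb C}/(R_L^{\mathbb C}\cap\Gamma)$ as a $\mathbb C^*$ power tower via Lemma \ref{ind1}, and pushes the exhaustion down one $(\mathbb C^*)^k$--stage at a time because it is constant on the underlying $(S^1)^k$--orbits. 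Your proposal mentions the Hirschowitz annihilator only as a label for a subgroup on the base, not as the source of the constancy that makes Kiselman applicable, so the descent of pseudoconvexity --- condition (ii) in the paper's strategy --- is unproved in your argument. Without it, the iteration that identifies $J$ as the maximal intermediate group $I$ with $\mathcal O(I/H)=\mathbb C$ cannot be run.
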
   

This note is organized as follows.   
Section two contains some technical results that are needed for the proof.  
The third section presents the proof of Theorem \ref{mnthm}.

 
\section{Technical Preparations}  

\subsection{The Hirschowitz annihilator}   \label{Hirschowitz}  
Every element $\xi\in\mathfrak g$, the Lie algebra of $G$, can be thought of as a right invariant vector field on $G$ 
and, as such, pushes down to a holomorphic vector field $\xi_X$ on any complex homogeneous manifold $X:=G/H$ of the group $G$.     
An inner integral curve in such a homogeneous space $X$ is a non--constant holomorphic map $\mathbb C \to X$ with relatively compact image 
in $X$ that is the integral curve of some vector field $\xi_X$ associated to some $\xi\in\mathfrak g$.  
Hirschowitz \cite{Hir75} considered such concepts in the context of infinitesimally homogeneous manifolds, where a manifold  
is infinitesimally homogeneous if every tangent space is generated by global holomorphic vector fields.  
A complex manifold that is homogeneous under the action of a Lie group of holomorphic transformations is 
infinitesimally homogeneous.     

\medskip   
Hirschowitz showed that a pseudoconvex, infinitesimally homogeneous $X$ that does not contain    
any inner integral curve is Stein \cite[Proposition 3.4]{Hir75}.   
This is the starting point of our investigations of pseudoconvex homogeneous manifolds that are not Stein given in \cite{GMO13}.      
By the maximum principle any plurisubharmonic function on a complex manifold $X$ is constant along every inner integral curve in $X$.   
One has to determine the ``directions of degeneracy'' of plurisubharmonic functions in terms of a certain subset of 
$\mathfrak g$ whose corresponding holomorphic vector fields ``kill them''.  
So in \cite{GMO13} we define the Hirschowitz annihilator $\mathcal A$ to be the connected Lie subgroup of $G$ whose Lie algebra is given by     
\[   
     \mathfrak a \; := \; \{ \ \xi\in\mathfrak g \ | \ \xi_X \varphi (x_0) \; = \; 0, \;  \forall   \varphi\in{\mathscr P(X)}  \ \}  ,  
\]  
where $\mathscr P(X)$ is the space of continuous plurisubharmonic functions on $X := G/H$.   
For continuous functions the derivative is understood in the sense of distributions.   
By definition $\mathfrak a$ is a complex vector subspace of $\mathfrak g$ and   
it is also a Lie subalgebra of $\mathfrak g$ that properly contains $\mathfrak h$, if $X$ is not Stein \cite[Lemma 3.3]{GMO13}.       
The corresponding (not necessarily closed) connected complex subgroup $\mathcal A$ of $G$ contains the identity component 
$H^0$ of $H$ and defines a complex foliation $\mathscr{F} = \{ F_x\}_{x\in X}$, the {\bf Levi foliation} of the manifold $X$.   
Every leaf $F_x$ of this foliation is a relatively compact immersed complex submanifold of $X$ containing every 
inner integral curve in $X$ passing through the point $x$ and is an orbit of the group $\mathcal A$.    
In general, complex foliations are rather difficult to understand.      
But here the foliation arises from a subgroup strongly reflecting the underlying geometry of the homogeneous manifold   
and is related to the existence of a plurisubharmonic exhaustion on it.  
This allows a sufficiently good understanding in 
order to analyze the structure from the point of view of its holomorphic function algebra.                 

\medskip   
If the leaves of the foliation are closed, then they are compact and $X$ is holomorphically convex \cite[Theorem 4.1]{GMO13}.  
Indeed, the holomorphic reduction is then given by the Remmert reduction \cite{Rem56} and has compact fiber and Stein base.   
The main difficulty occurs when this is not the case.    
The following observation is essential in what follows.      

\begin{remark} 
A question of Serre \cite{Ser53} asks whether the total space of every holomorphic fibration with 
Stein fiber and Stein base is itself Stein.  
Counterexamples are known, some of which are even homogeneous, e.g., \cite{CL85}.  
Among other things Lemma \ref{Hirsch} below asserts that Serre's question has an affirmative answer in the present setting.        
\end{remark}

\begin{lemma} [\cite{Hir75} and Remark 2.5 in \cite{GMO13}]  \label{Hirsch}   
Suppose $Y$ is a pseudoconvex complex homogeneous manifold that admits a holomorphic fibration  
\[  
              Y \; \stackrel{F}{\longrightarrow} \; B  
\]
with both $F$ and $B$ being holomorphically separable manifolds.   
Then $Y$ is Stein.   
\end{lemma}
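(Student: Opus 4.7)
The plan is to invoke Hirschowitz's Steinness criterion from \cite{Hir75}: a pseudoconvex, infinitesimally homogeneous complex manifold that contains no inner integral curve is Stein. Since $Y$ is a complex homogeneous manifold it is automatically infinitesimally homogeneous, so the entire task reduces to ruling out inner integral curves in $Y$.

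Suppose, for contradiction, that $\varphi \colon \mathbb{C} \to Y$ is an inner integral curve, i.e., a non-constant holomorphic map with relatively compact image coming from some $\xi \in \mathfrak{g}$. The first step is to project it via the given fibration to obtain a holomorphic map $F \circ \varphi \colon \mathbb{C} \to B$ whose image is relatively compact in $B$. For any $g \in \mathcal{O}(B)$, the composition $g \circ F \circ \varphi$ is then a bounded entire function on $\mathbb{C}$, hence constant by Liouville. Because $B$ is holomorphically separable, the map $F \circ \varphi$ itself must be constant; consequently $\varphi(\mathbb{C})$ is contained in a single fiber of $F$.

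The second step is to apply the identical Liouville-and-separation argument inside that fiber. The fiber is a closed complex submanifold of $Y$ biholomorphic to the holomorphically separable manifold $F$, and the image of $\varphi$ remains relatively compact inside it. The same reasoning forces $\varphi$ to be constant, contradicting the assumption that it is an inner integral curve. Hence $Y$ contains no inner integral curve, and Hirschowitz's criterion yields that $Y$ is Stein.

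I do not anticipate a serious technical obstacle here: the argument reduces to two applications of the Liouville trick combined with a direct quotation of Hirschowitz's proposition. The only point that demands a little care is conceptual, namely that once $\varphi$ is projected to $B$ the map $F\circ\varphi$ is no longer the integral curve of any vector field on $B$, so one must treat it purely as a holomorphic map with relatively compact image — which is exactly what holomorphic separability plus Liouville requires.
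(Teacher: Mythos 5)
Your proposal is correct and follows essentially the same route as the paper: the paper's own proof also reduces to Hirschowitz's criterion and observes that since neither the holomorphically separable fiber nor the holomorphically separable base can contain an inner integral curve, neither can $Y$ — your two-step Liouville argument is exactly the unpacking of that one-line claim. The only cosmetic issue is that you use $F$ both for the fiber (as in the lemma's statement) and for the projection map; otherwise the details (relative compactness of the projected image, and of the image inside the closed fiber) are handled correctly.
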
  

\begin{proof}  
Note that every complex homogeneous manifold is infinitesimally homogeneous.  
As neither $F$ nor $B$ can contain any inner integral curve,  
the same is true of $Y$.   
Therefore, $Y$ is Stein \cite[Proposition 3.4]{Hir75}.    
\end{proof}

\subsection{Tits' Normalizer Fibration}   

Suppose $H$ is a $k$--dimensional closed subgroup of an $n$--dimensional Lie group $G$.  
The Lie algebra $\mathfrak h$ of $H$ is a Lie subalgebra of $\mathfrak g$ and can be considered   
as a point in the Grassman manifold $Gr(k,n)$ of $k$--dimensional vector subspaces of $\mathfrak g$.  
Since ${\rm ad}(G) \subset GL(\mathfrak g)$, there is a natural action of ${\rm ad}(G)$ on $Gr(k,n)$ and 
the ${\rm ad}(G)$--orbit of the point $\mathfrak h$ can be identified with $G/N$, where $N := N_G(H^0)$, 
i.e., the normalizer in $G$ of the identity component $H^0$ of $H$.  
Via the Pl\"ucker embedding $Gr(k,n)$ can be realized as a submanifold of some projective space such 
that the automorphisms of the Grassman are restrictions of those automorphisms of the projective space 
that stabilize the embedded Grassman.  
In this way we realize $G$ acting linearly on $G/N$ via the adjoint representation.  
Since $H \subset N_G(H^0)$, we have the Tits normalizer fibration $G/H \to G/N$  \cite{Tits62}.      

\begin{remark}  
Let $I$ be a complex Lie subgroup of the complex Lie group $G$.   
The normalizer $N_G(I^0)$, where $I^0$ denotes the connected component 
of the identity of $I$  is a closed subgroup of $G$, since it is the 
isotropy subgroup of a point in a complex projective space under the adjoint action of the group $G$.  
\end{remark}

\subsection{Closure of orbits}  

Let $G$ be a (real) Lie group and $H$ a closed subgroup of $G$.   
For $I$ a normal Lie subgroup of $G$, set $F := cl_X(I.x_0)$,   
where $x_0$ is  the base point of $X := G/H$, and let $J := cl_G(I\cdot H)$. 

\begin{lemma}  \label{closure}   
Then $J$ is a subgroup of $Stab_G(F)$.  
\end{lemma}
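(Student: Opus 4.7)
The plan is to verify two things: that $J$ is a closed subgroup of $G$, and that it acts on $X$ by homeomorphisms preserving the set $F$.

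First I would note that since $I$ is normal in $G$, the product set $I\cdot H$ equals $H\cdot I$ and is itself a (not necessarily closed) subgroup of $G$. Hence its closure $J=cl_G(I\cdot H)$ is a closed subgroup of $G$, using the standard fact that the closure of a subgroup of a topological group is a subgroup.

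Next I would show the algebraic core: the subgroup $I\cdot H$ sends the orbit $I\cdot x_0$ into itself. Indeed, for $i,i'\in I$ and $h\in H$ we compute
\[
  (ih)(i'x_0) \;=\; i(hi'h^{-1})(hx_0) \;=\; i(hi'h^{-1})\,x_0,
\]
because $hx_0 = x_0$. Normality of $I$ gives $hi'h^{-1}\in I$, so the result lies in $I\cdot x_0$. Thus $(I\cdot H)\cdot (I\cdot x_0)\subset I\cdot x_0$.

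Then I would pass to closures by continuity. Given $g\in J$ and $y\in F$, choose sequences $g_n\in I\cdot H$ and $y_n\in I\cdot x_0$ with $g_n\to g$ and $y_n\to y$. The previous step gives $g_n y_n\in I\cdot x_0$, and continuity of the $G$-action on $X$ yields $g_n y_n\to gy$, so $gy\in cl_X(I\cdot x_0)=F$. Hence $J\cdot F\subset F$. Finally, since $J$ is a subgroup, $g^{-1}\in J$ as well, so $g^{-1}F\subset F$, i.e., $F\subset gF$. Combining the two inclusions gives $gF=F$ for every $g\in J$, which is exactly the statement $J\subset Stab_G(F)$.

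There is no serious obstacle here; the one place where something non-trivial is used is the normality of $I$ in the identity $(ih)(i'x_0)=i(hi'h^{-1})x_0$, which is what allows the action of $I\cdot H$ to stabilize the $I$-orbit before taking closures. Everything else is the routine interplay between closures and continuous group actions.
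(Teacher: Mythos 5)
Your argument is correct and is essentially the paper's proof written out in full: the paper simply notes that $Stab_G(F)$ is closed because $F$ is closed and that $I\cdot H\subset Stab_G(F)$ (your computation $(ih)(i'x_0)=i(hi'h^{-1})x_0$ is exactly what makes that ``clearly'' true), whence the closure $J$ lands in $Stab_G(F)$. Your sequence-and-limit passage to closures is just an unrolled version of the same observation, so there is nothing to add.
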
 

\begin{proof}  
Since $F$ is closed, $Stab_G(F)$ is closed.  
Clearly $I\cdot H \subset Stab_G(F)$.  
So $J \subset Stab_G(F)$.  
\end{proof}  

\begin{remark}  \label{closurermk}   
As a consequence, $F = J . x_0$.  
Now for $x\in X$ define $[x] := cl_X(I.x)$ and note that if $x=g(x_0)$, then $[x] = g(F)$.  
Thus the classes $[x]$ are the fibers of the homogeneous fibration $G/H \to G/J$.    
\end{remark}

\subsection{Fibrations over Projective orbits}  

We use the following notation for the derived series of the Lie group $G$ 
\[  
     G^{(0)} := G, \; G^{(1)} := G' = [G,G], \; \ldots, \; G^{(k)} := [G^{(k-1)},G^{(k-1)}] \mbox{ for all } k >1   .   
\]

\begin{lemma}  \label{flag1}  
Let $X := G/H$ be an orbit of a connected complex Lie group
$G$ acting holomorphically and effectively on some projective space.   
Assume $J$ is a connected, complex subgroup of $G$   
that has positive dimensional orbits in $G/H$ that are relatively compact.  
Then $J$ is a subgroup of every subgroup $G^{(m)}$ of the derived series of $G$.      
\end{lemma}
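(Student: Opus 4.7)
I would argue by induction on $m$. The base case $m=0$ holds since $J\subseteq G=G^{(0)}$ by hypothesis. For the inductive step, assume $J\subseteq G^{(m)}$ and aim for $J\subseteq G^{(m+1)}=[G^{(m)},G^{(m)}]$. Since $G^{(m)}$ is itself a connected complex Lie subgroup of $G$, it inherits the effective holomorphic action on the ambient projective space $\mathbb P^N$, and $J\subseteq G^{(m)}$ has a positive-dimensional, relatively compact orbit inside the $G^{(m)}$-orbit $G^{(m)}\cdot x_0\subseteq\mathbb P^N$. So it suffices to prove the following subclaim: if $K$ is a connected complex Lie group acting effectively and holomorphically on a projective space, and $A\subseteq K$ is a connected complex subgroup with a positive-dimensional, relatively compact orbit in a $K$-orbit, then $A\subseteq [K,K]$.

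I would prove the subclaim by contradiction. If some $\xi\in\mathfrak a$ has nonzero image in the abelian quotient $\mathfrak k/[\mathfrak k,\mathfrak k]$, consider the complex one-parameter subgroup $A_\xi:=\exp(\mathbb C\xi)\subseteq A$. Its orbits are contained in $A$-orbits, so $A_\xi$ has positive-dimensional relatively compact orbits in $\mathbb P^N$ (note that relative compactness in $X$ forces the closure in $X$ to agree with the closure in $\mathbb P^N$, by compactness in Hausdorff). The closure $Y:=\overline{A_\xi\cdot x_0}\subseteq\mathbb P^N$ is a projective subvariety by Chow's theorem, and the Zariski closure $\overline{A_\xi}$ in $PGL(N+1,\mathbb C)$ is a connected abelian linear algebraic group acting algebraically on $Y$. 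Decomposing $\overline{A_\xi}$ into toral and unipotent parts via Jordan decomposition, applying Borel's fixed-point theorem (to rule out nontrivial orbits purely through the unipotent part) and performing a weight-space analysis of $\mathbb C^{N+1}$ with respect to the toral part, one should derive structural constraints on $\xi$ forcing $\xi\in[\mathfrak k,\mathfrak k]$, the desired contradiction.

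The main obstacle is bridging the analytic Lie-theoretic definition of $G^{(m)}$ and $[K,K]$, which are defined through the Lie-algebra commutator series and need not be Zariski closed in $PGL(N+1,\mathbb C)$, with the algebraic tools invoked (Chow's theorem and Borel's fixed-point theorem, which live in the algebraic category). In particular, the Zariski closure of $[K,K]$ need not coincide with the commutator $[\overline K,\overline K]$ of the Zariski closure $\overline K$, so translating the algebraic weight conditions back into the Lie-algebra statement $\xi\in[\mathfrak k,\mathfrak k]$ requires careful bookkeeping.
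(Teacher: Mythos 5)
Your overall induction scheme (peel off one step of the derived series at a time) matches the paper's iteration, but the crux of the argument is left open, and the route you sketch for it does not obviously close. The decisive step is to show that a connected complex subgroup with a positive-dimensional relatively compact orbit must lie in (the fiber over the base point of the fibration defined by) the commutator subgroup. You propose to detect $\xi\notin[\mathfrak k,\mathfrak k]$ by analyzing the Zariski closure of the single one-parameter group $A_\xi$ via Chow, Borel's fixed-point theorem and a weight-space decomposition. But whether $\xi$ lies in $[\mathfrak k,\mathfrak k]$ is not a property of the dynamics of $A_\xi$ on $\mathbb P^N$ alone; it is a statement about the image of $\xi$ in the abelianization of $K$, and no amount of bookkeeping on $\overline{A_\xi}$ will see that quotient. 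Your own closing paragraph concedes exactly this, so the proposal as written is a plan with its central step missing rather than a proof.

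The paper supplies the two ingredients your plan lacks. First, Chevalley's theorem is applied not to $A_\xi$ but to the commutator group $G'$: its image in the automorphism group of the ambient projective space is algebraic, hence its orbits in $X$ are closed, which yields a genuine holomorphic fibration $G/H\to G/H\cdot G'$ (this also repairs the point you glossed over in the inductive step, namely that relative compactness in $X$ passes to relative compactness in the closed fiber $G'/H\cap G'$). Second, by a result of Huckleberry--Oeljeklaus the base $G/H\cdot G'$ is a $G$-orbit in a \emph{Stein abelian} group $\overline G/\overline G_{x_0}\cdot\overline G'$; since a Stein manifold admits no positive-dimensional relatively compact connected immersed complex submanifolds, the maximum principle forces the connected, relatively compact $J$-orbit to map to a point, i.e.\ $J\cdot x_0\subset G'\cdot x_0$. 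One then replaces $(G,H)$ by $(G',H\cap G')$ and iterates. The Steinness of the abelianized base is the mechanism that produces the contradiction you were hoping to extract from weight spaces; without it, or some substitute for it, your subclaim remains unproved.
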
   

\begin{proof} 
By a result of Chevalley \cite{Chev51} the image of the commutator group $G'$   
in the automorphism group of the ambient projective space is algebraically closed.  
This means that $G'$ is acting as an algebraic group on the projective space and,  
in particular, that its orbits in $G/H$ are closed.  
Thus one has the commutator fibration $G/H \to G/H\cdot G'$.  
Let $x_0\in G/H$ be the base point.  
The Stein Abelian group $\overline{G}/\overline{G}_{x_0}\cdot \overline{G}'$ contains $G/H\cdot G'$ as a $G$--orbit \cite{HO81},  
where the bar denotes the Zariski closure.    
Therefore, $J.x_0$ is contained in the fiber of $G/H \to G/H\cdot G'$ by the maximum principle with   
$J.x_0$ still being relatively compact in the closed fiber $G'/H\cap G'$.  
Replace $G$ (resp. $H$) by $G^{(1)} :=G'$ (resp. $H^{(1)}:=H\cap G' = G'_{x_0}$) and   
iterate the argument to see that $J$ is a subgroup of every group in the derived series of $G$.  
\end{proof}  

\begin{lemma}  \label{flag3}  
Let $G$ be a connected complex Lie group, $H$ a closed complex subgroup of $G$, and $I$ a closed complex subgroup of $G$ containing    
$H$ with $G/I$ equivariantly embedded in the complex projective space $\mathbb P_N$.     
Suppose that the fibration $G/H \to G/I$ is a covering.           
If $J$ is a connected, normal, complex Lie subgroup of $G$ whose orbits in $G/H$ are relatively compact, then the $J$--orbits in $G/H$ are flag manifolds.   
\end{lemma}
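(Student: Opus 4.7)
The plan is to reduce to the projectively embedded setting of Lemma~\ref{flag1} via the covering $\pi$, and then to appeal to the Borel--Remmert classification of compact K\"ahler homogeneous manifolds. Let $x_0=eH$ and $y_0=\pi(x_0)=eI$ denote the base points.

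First, since $\pi:G/H\to G/I$ is a covering it carries the $J$-orbit $Jx_0=J/(J\cap H)$ onto the $J$-orbit $Jy_0=J/(J\cap I)$ in $G/I$, preserving both dimension and relative compactness. If the common dimension is zero, the orbits are single points and hence trivially flag manifolds, so I assume positive dimension. To invoke Lemma~\ref{flag1} I pass to the quotient $\widehat G:=G/K$ by the ineffectivity kernel $K$ of the $G$-action on $\mathbb P_N$; since $K$ is closed, normal in $G$, and contained in $I$, the quotient $\widehat G$ acts effectively on $\mathbb P_N$ and the image $\widehat J$ of $J$ still has positive-dimensional, relatively compact orbits in $G/I$. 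Lemma~\ref{flag1} then yields $\widehat J\subset\widehat G^{(m)}$ for every $m$.

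Next, let $\bar F\subset G/I$ denote the closure of $Jy_0$. Lemma~\ref{closure} together with Remark~\ref{closurermk}, applied with the normal subgroup $J$ to the homogeneous space $G/I$, identifies $\bar F$ as the compact orbit $L/(L\cap I)$ of $L:=\mathrm{cl}_G(JI)$. As a compact analytic subset of $\mathbb P_N$, $\bar F$ is a projective algebraic variety by Chow's theorem, hence a compact K\"ahler complex homogeneous manifold. The Borel--Remmert structure theorem decomposes $\bar F$ as $T\times Q$ with $T$ a complex torus and $Q$ a flag manifold; the splitting $\mathrm{Aut}^0(T\times Q)=T\times\mathrm{Aut}^0(Q)$ (using that holomorphic maps between a torus and a flag manifold are constant) together with the derived-series containment from the previous step forces the projection of $Jy_0$ to $T$ to be constant, so the density of $Jy_0$ in $\bar F$ collapses $T$ to a point, and $\bar F=Q$ is a flag manifold.

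Finally, the restriction of $\pi$ to the closure $F\subset G/H$ of $Jx_0$ is a finite covering $F\to\bar F$, which is a biholomorphism because flag manifolds are simply connected; hence $F$ is itself a flag manifold. To conclude that $Jx_0=F$, I use that $J$ is normal in $L$ and $L$ acts transitively on $F$, so all $J$-orbits in $F$ are $L$-translates of $Jx_0$ and in particular of the same dimension; combined with the density of $Jx_0$ in $F$ and the algebraic structure of the flag manifold, this forces $Jx_0$ to be closed, hence equal to $F$. The main obstacle I anticipate is the step ruling out the torus factor in the Borel--Remmert decomposition, since $\widehat J\subset\widehat G^{(m)}$ for every $m$ does not immediately imply that $J$ itself is perfect; one must carefully combine the normality of $J$ in $G$, the density of $Jy_0$ in $\bar F$, and the algebraic structure inherited from the projective embedding to eliminate any non-trivial abelian quotient of $L$.
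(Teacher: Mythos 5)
Your overall route (Borel--Remmert on the orbit closure, then descent through the covering) genuinely differs from the paper's, but it has a gap at the decisive step, and it is essentially the step you flag yourself plus one more you do not. The paper's proof, after invoking Lemma~\ref{flag1} exactly as you do, shows that the \emph{image of $J$ is a semisimple algebraic group}: since $G^{(k)}=G^{(k+1)}$ is perfect and algebraic, its radical is unipotent, normality of $J$ gives $R_J=R_{G^{(k)}}\cap J$, the $R_J$--orbits are then closed copies of $\mathbb C^q$, and relative compactness forces $q=0$. Algebraicity is what makes every $J$--orbit Zariski open in its closure with boundary a union of orbits of \emph{strictly lower} dimension; normality of $J$ then gives equidimensionality of all orbits, so the boundary is empty and the orbits are closed, hence compact, hence flag manifolds. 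Your proposal never establishes that the $J$--action is algebraic (or that the image of $J$ is semisimple), and this cannot be skipped.

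Concretely, your final step --- ``all $J$--orbits in $F$ have the same dimension, $Jx_0$ is dense in $F$, and the algebraic structure of the flag manifold forces $Jx_0$ to be closed'' --- is not a proof. Equidimensionality plus density does not imply closedness for orbits of a merely holomorphic (non-algebraic) group action: an irrational one-parameter flow on a torus has all orbits of the same dimension, each dense and none closed. A dense orbit may also have dimension strictly smaller than $\dim F$, so you cannot argue by openness either. What rules this behaviour out is precisely the paper's radical argument, which you would have to reproduce; once you have it, the Borel--Remmert detour is unnecessary, since a compact orbit of a complex Lie group in projective space is already a flag manifold. Your other acknowledged gap --- eliminating the torus factor $T$ --- is more easily repaired (apply Lemma~\ref{flag1} to $L$ acting on its compact projective orbit $\bar F=L/(L\cap I)$ to get $J\subset L^{(m)}$ for all $m$, so $J$ maps trivially to the abelian quotient $T$, and density of $Jy_0$ collapses $T$ to a point), but as written that step is also incomplete. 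The parts you do carry out correctly --- transferring the problem through the covering, identifying $\bar F$ as the compact orbit of $\mathrm{cl}_G(J\cdot I)$ via Lemma~\ref{closure} and Remark~\ref{closurermk}, and lifting back through the covering using simple connectivity of flag manifolds --- coincide with the beginning and end of the paper's argument.
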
  

\begin{proof}  
By Lemma \ref{flag1} the image of $J$ in the automorphism group of $G/I$   
lies in the image of every subgroup $G^{(m)}$ of the derived series of $G$.    
Since $G$ has finite dimension, one has $G^{(k)} = (G^{(k)})' = G^{(k+1)} = \ldots $ for some $k$.  
As the $J$--orbits have positive dimension, $G^{(k)}$ is a positive dimensional perfect Lie group that is acting algebraically on $\mathbb P_N$.     
Its radical $R_{G^{(k)}}$ is nilpotent  \cite{Jac62}.    
Since $J$ is normal, its radical $R_{J} = R_{G^{(k)}} \cap J$ \cite{Jac62}.     
Thus $R_{J}$ is a connected complex subgroup of the unipotent group $R_{G^{(k)}}$ and so 
is acting algebraically on $\mathbb P_N$ with each of its orbits a closed copy of $\mathbb C^q$ for some $q \ge 0$.   
Since the $J$--orbits are relatively compact, we must have $q = 0$.  
This implies that $R_J$ acts trivially and thus $J$ is acting algebraically as a semisimple group on $G/I$.

\medskip 
Now $A := {\rm cl}_{G}(J \cdot H) \subset K := {\rm cl}_{G}(J\cdot I)$, since $H \subset I$ by assumption.    
By Remark \ref{closurermk} and the assumption that the $J$--orbits are relatively compact we get the following diagram 
\[ 
        \begin{array}{ccc}   G/H & \longrightarrow & G/A \\ 
        \downarrow & & \downarrow \\ 
        G/I & \longrightarrow & G/K \end{array} 
\]  
The $J$--orbit through the base point in $G/I$ is contained in $K/I$ and the latter space is compact.   
Thus the closure of this $J$--orbit lies in $K/I$.  
Since $J$ is acting algebraically as a semisimple complex group, this orbit is Zariski open in its closure and     
its boundary consists of $J$--orbits of strictly lower dimension.  
Since $J \lhd G$, all orbits have the same dimension and so the boundary is empty, i.e., the $J$--orbits are closed and thus compact.  
Compact orbits of a complex Lie group acting holomorphically on a projective space are flag manifolds.  
Since flag manifolds are simply connected and the $J$--orbits in $G/H$ cover the $J$--orbits in $G/I$, it follows that 
the $J$--orbits in $G/H$ are flag manifolds.  
\end{proof}  

\begin{cor}  \label{flag2}  
Let $G$ be a connected complex Lie group, $H$ a closed complex subgroup of $G$, and $I$ a closed complex subgroup of $G$ containing    
$H$ with $G/I$ equivariantly embedded in the complex projective space $\mathbb P_N$   
and the fibers of the fibration $G/H \to G/I$ holomorphically separable.        
If $J$ is a connected, normal, complex Lie subgroup of $G$ whose orbits in $G/H$ are relatively compact, then the $J$--orbits in $G/H$ are flag manifolds.   
\end{cor}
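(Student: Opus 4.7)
The plan is to mimic the structure of Lemma \ref{flag3}, substituting holomorphic separability of the fibers for the covering hypothesis. As a first step, I apply Lemma \ref{flag3} with $H$ replaced by $I$: the identity fibration $G/I \to G/I$ is trivially a covering, and the $J$-orbits in $G/I$, being continuous images under the projection $G/H \to G/I$ of the relatively compact $J$-orbits in $G/H$, are themselves relatively compact. Thus every $J$-orbit in $G/I$ is already known to be a flag manifold.

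Now fix $x_0 = eH$, let $F := J/(J\cap H)$ be the $J$-orbit through $x_0$ in $G/H$, and let $\bar F := J/(J\cap I)$ be its image in $G/I$. The $J$-equivariant restricted projection $F \to \bar F$ has fiber $(J\cap I)/(J\cap H)$, which sits inside the holomorphically separable fiber $I/H$. If this fiber is discrete, then $F \to \bar F$ is a topological covering; the connectedness of $F$ and the simple connectedness of the flag manifold $\bar F$ then force the covering to be a biholomorphism, exhibiting $F$ as a flag manifold.

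To establish discreteness, it suffices to show that the orbit $L \cdot x_0$ of $L := (J \cap I)^0$ is the singleton $\{x_0\}$: this forces $L \subseteq H$, whence $(J \cap I)^0 = (J \cap H)^0$ and $(J \cap I)/(J \cap H)$ is discrete. Because $I/H$ is closed in $G/H$ and $L \cdot x_0 \subseteq F$ is relatively compact in $G/H$, the orbit $L \cdot x_0$ is relatively compact in $I/H$. The key ingredient I will invoke is the classical Liouville property: every bounded holomorphic function on a connected complex Lie group is constant. One sees this by noting that for any $\xi$ in the Lie algebra and any $g$ in the group, the entire function $t \mapsto f(g \exp(t\xi))$ is bounded, hence constant by Liouville, and since $\exp$ generates the group, $f$ must be right-invariant. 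Pulling back any $f \in {\mathcal O}(I/H)$ along the orbit map $L \to L \cdot x_0$ produces such a bounded holomorphic function on $L$, forcing $f$ to be constant on $L \cdot x_0$; the holomorphic separability of $I/H$ then collapses $L \cdot x_0$ to a point.

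The main obstacle is this Liouville step, since it is precisely where one loses the luxury of the covering hypothesis of Lemma \ref{flag3}. A secondary technical point requiring care is ensuring that $L \cdot x_0$ is relatively compact in $I/H$ itself and not merely in the ambient $G/H$, which relies on fibers of $G/H \to G/I$ being closed. Once these are in hand, the simply-connected-base argument completing the proof is routine.
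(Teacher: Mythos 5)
Your proposal is correct and follows essentially the same route as the paper: the paper's proof also observes that holomorphic separability of $I/H$ forces the $J$--orbits to meet the fibers of $G/H \to G/I$ discretely (your Liouville/maximum--principle step makes this explicit), so that they cover the relatively compact $J$--orbits in $G/I$, and then invokes Lemma \ref{flag3} together with the simple connectedness of flag manifolds. You have merely spelled out the details that the paper's two--sentence proof leaves implicit.
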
  

\begin{proof} 
Since $I/H$ is holomorphically separable, the $J$--orbits intersect the fibers of the fibration $G/H \to G/I$ transversally and 
so cover the corresponding $J$--orbits in $G/I$ which are necessarily relatively compact in $G/I$.       
The result now follows from Lemma \ref{flag3}. 
\end{proof}   

\subsection{Normality under closure and complexification}  
For $J$ a connected Lie subgroup of $G$ the complexification $J^{\mathbb C}$ of $J$ is the connected Lie subgroup of $G$ 
corresponding to the Lie algebra $\mathfrak j + i \mathfrak j$, where $\mathfrak j$ denotes the Lie algebra of $J$.        
Subsequently we need to consider what happens to normality under closure and complexification and so the following is important.   

\begin{lemma}  \label{normal}  
Suppose $I$ is a connected normal Lie subgroup of a connected Lie subgroup $J$ of a connected complex Lie group $G$.  
Then 
\begin{enumerate} 
\item $I \lhd {\rm cl}_G(J)$  
\item $I^{\mathbb C} \lhd J^{\mathbb C}$
\end{enumerate}  
In particular, $I^{\mathbb C} \lhd \widetilde{J}$, where $\widetilde{J}$ is the smallest connected closed complex subgroup of $G$ that contains $J$.  
\end{lemma}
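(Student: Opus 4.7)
The plan is to handle (1) via closedness of the normalizer, (2) by a direct Lie algebra computation, and then chain the two parts to deduce the ``in particular'' assertion.

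For (1), I would invoke the subgroup $N_G(\mathfrak i) := \{ g \in G \mid {\rm Ad}(g)\mathfrak i = \mathfrak i \}$, which is closed in $G$ because it is the stabilizer of the point $[\mathfrak i]$ in the Grassmann manifold $Gr(\dim I, \dim G)$ under the continuous adjoint action, exactly as in the discussion preceding Lemma \ref{closure}. Normality of $I$ in $J$ says ${\rm Ad}(j)\mathfrak i = \mathfrak i$ for every $j \in J$, hence $J \subset N_G(\mathfrak i)$, and closedness of $N_G(\mathfrak i)$ upgrades this to $cl_G(J) \subset N_G(\mathfrak i)$. Since $I$ is connected with Lie algebra $\mathfrak i$, invariance of $\mathfrak i$ under ${\rm Ad}(g)$ forces $g I g^{-1} = I$, so $I \lhd cl_G(J)$.

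For (2), I would verify at the Lie algebra level that $\mathfrak i + i\mathfrak i$ is an ideal in $\mathfrak j + i \mathfrak j$. Writing generic elements as $a + ib$ with $a,b \in \mathfrak j$ and $c + id$ with $c,d \in \mathfrak i$, $\mathbb C$-bilinearity of the bracket together with $[\mathfrak j,\mathfrak i] \subset \mathfrak i$ gives $[a+ib,\, c+id] = ([a,c]-[b,d]) + i([a,d]+[b,c]) \in \mathfrak i + i\mathfrak i$. Passing from the ideal $\mathfrak i^{\mathbb C}$ in $\mathfrak j^{\mathbb C}$ to the associated connected Lie subgroups yields $I^{\mathbb C} \lhd J^{\mathbb C}$.

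For the ``in particular'' statement, I would chain the two parts: (2) provides $I^{\mathbb C} \lhd J^{\mathbb C}$, and then (1) applied to this inclusion inside $G$ gives $I^{\mathbb C} \lhd cl_G(J^{\mathbb C})$. It remains to show $\widetilde J \subset cl_G(J^{\mathbb C})^0$, which follows from the fact that $cl_G(J^{\mathbb C})^0$ is a closed, connected, complex subgroup of $G$ containing $J \subset J^{\mathbb C}$, combined with the minimality defining $\widetilde J$. Since $I \subset J \subset \widetilde J$ and $\widetilde J$ is complex, we also have $I^{\mathbb C} \subset \widetilde J$, and thus $I^{\mathbb C} \lhd \widetilde J$. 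The only delicate technical point I foresee is confirming that the closure of a connected complex Lie subgroup is again complex (so that $cl_G(J^{\mathbb C})^0$ qualifies as a candidate for $\widetilde J$); if one wishes to sidestep this, the same conclusion can be reached by working with the identity component of the closed subgroup $N_G(I^{\mathbb C})$, whose Lie algebra $\{\xi \in \mathfrak g \mid [\xi, \mathfrak i^{\mathbb C}] \subset \mathfrak i^{\mathbb C}\}$ is manifestly a complex subspace by $\mathbb C$-bilinearity of the bracket.
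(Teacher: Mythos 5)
Your proposal is correct, but only because of the fallback you offer at the end, so it is worth separating what matches the paper from what does not. Parts (1) and (2) are in substance the paper's own argument: the paper proves (2) by exactly your bracket computation, and proves (1) by the limit $gIg^{-1} = \lim g_n I g_n^{-1}$; your version via the closed normalizer $N_G(\mathfrak i)$ is actually the more careful one, since $I$ need not be closed and a limit of points of $I$ a priori only lands in ${\rm cl}_G(I)$. The genuine divergence is in the ``in particular'' step. The paper observes that $\widetilde J$ is obtained from $J$ by \emph{alternately} taking closures and complexifications, and applies (1) and (2) at each stage of that finite iteration. Your primary route --- pass to $J^{\mathbb C}$, then to ${\rm cl}_G(J^{\mathbb C})$, and argue that $\widetilde J \subset {\rm cl}_G(J^{\mathbb C})^0$ --- does not work, because the point you flag as ``delicate'' is in fact false: the closure of a connected complex Lie subgroup of a complex Lie group need not be complex. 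For instance, the one--parameter complex subgroup $J = \{ (e^z, e^{\alpha z}) \mid z \in \mathbb C \}$ of $(\mathbb C^*)^2$ with $\alpha$ irrational real has real three--dimensional closure, whose Lie algebra is not a complex subspace; here $\widetilde J = (\mathbb C^*)^2$ is strictly larger than ${\rm cl}_G(J^{\mathbb C}) = {\rm cl}_G(J)$, so the claimed containment fails. Fortunately your fallback is complete and correct: $N_G(I^{\mathbb C})$ is closed, its Lie algebra $\{ \xi \in \mathfrak g \mid [\xi, \mathfrak i + i\mathfrak i] \subset \mathfrak i + i\mathfrak i \}$ is a complex subspace by $\mathbb C$--bilinearity, so $N_G(I^{\mathbb C})^0$ is a closed connected complex subgroup; it contains $J$ because ${\rm Ad}(j)$ is $\mathbb C$--linear and preserves $\mathfrak i$, hence preserves $\mathfrak i + i\mathfrak i$; minimality then gives $\widetilde J \subset N_G(I^{\mathbb C})^0$, while $I^{\mathbb C} \subset \widetilde J$ since the Lie algebra of $\widetilde J$ is a complex subalgebra containing $\mathfrak i$. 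This single normalizer argument is arguably cleaner than the paper's alternating iteration, but it must be promoted from a parenthetical remark to the actual proof, since the route you present as primary is broken.
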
   

\begin{proof} 
(1) If $\lim g_n = g \in {\rm cl}_G(J)$,where $g_n\in J$,  then $gIg^{-1} = \lim g_n I g_n^{-1} \subset I$.  

\noindent  
(2) This follows from the fact that 
$ [\mathfrak i, \mathfrak j ] \subset \mathfrak i \Longrightarrow [ \mathfrak i + i \mathfrak i , \mathfrak j + i \mathfrak j ] \subset \mathfrak i + i \mathfrak i$.  

\noindent  
Note that $\widetilde{J}$ can be formed by alternately taking the complexification and closure of $J$.  
Applying (1) and (2), as appropriate, completes the proof.  
\end{proof}

\subsection{Existence of a fibration by a solvmanifold}   

\begin{proposition}[personal communication from K. Oeljeklaus] \label{Karl}    
Let $X := G/\Gamma$, where $\Gamma$ is a discrete subgroup of a connected, simply connected,   
complex Lie group $G$ with Levi decomposition $G = S \ltimes R$ and $\dim R > 0$.       
Then there is a connected, complex, solvable subgroup $H$ of $G$ normalized by
$\Gamma$, containing $R$, with $H\cdot \Gamma$ a closed subgroup of $G$.   
In particular, one has the proper fibration   (unless $S = \{ e \}$)   
\begin{eqnarray} \label{radfibr}  
       G/\Gamma \;  \longrightarrow \;  G/H\cdot \Gamma \; = \; S/S\cap H\cdot \Gamma   
\end{eqnarray}  
that has the connected complex solvmanifold $H/H \cap \Gamma$ as its typical fiber.    
\end{proposition}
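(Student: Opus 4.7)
The plan is to take $H$ as the identity component of the closure of $R\Gamma$ in $G$ (complexified if needed), and verify the required properties in turn; the main difficulty will be establishing solvability.

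First I would set up the key subgroup via the quotient $p \colon G \to S = G/R$. Since $R\Gamma = p^{-1}(p(\Gamma))$ is $R$-saturated and $p$ is a continuous open surjection, one has $\overline{R\Gamma} = p^{-1}(K)$, where $K := \overline{p(\Gamma)} \subset S$. Setting $L := K^0$ and $M := p^{-1}(L)$ produces a closed connected real Lie subgroup of $G$ that contains $R$ and coincides with the identity component of $\overline{R\Gamma}$. Because $M$ is characteristic in $\overline{R\Gamma}$ and $\Gamma \subset \overline{R\Gamma}$, $\Gamma$ normalizes $M$. The induced image of $\Gamma$ in the discrete quotient $\overline{R\Gamma}/M$ is both dense (since $R \subset M$ and $R\Gamma$ is dense in $\overline{R\Gamma}$) and discrete, hence all of the quotient; consequently $M\Gamma = \overline{R\Gamma}$ is closed.

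The main step, which I expect to be the hardest, is to show that $M$, or a suitable enlargement of it, is a complex solvable subgroup. If $M$ is already complex I would take $H := M$; otherwise I would replace it by the connected complex Lie subgroup $H$ of $G$ with Lie algebra $\mathfrak m + i \mathfrak m$, still normalized by $\Gamma$ by Lemma \ref{normal}. Either way the crux is to show that $L \subset S$ is solvable. For this I would exploit the discreteness of $\Gamma$ in the \emph{simply connected complex} Lie group $G$: any sequence $\gamma_n \in \Gamma$ with $p(\gamma_n) \to e$ in $S$ must have its $R$-component escape to infinity in $R$, and a careful analysis of the resulting rescaled tangent directions in $\mathfrak s$ (in the spirit of the Mostow--Auslander theory of discrete subgroups of simply connected Lie groups) should force those directions to span only a solvable subalgebra of $\mathfrak s$. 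A secondary subtlety, relevant when complexification is needed, is to re-verify that $H\Gamma$ remains closed; this is not automatic when $\mathfrak m \subsetneq \mathfrak m + i \mathfrak m$, but should follow once solvability is in hand by a direct analysis of $L^{\mathbb C} \cdot p(\Gamma)$ inside $S$.

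Once $H$ has been produced, the fibration in \eqref{radfibr} is immediate. Because $H \supset R$ and $G = SR$ one has $G = SH$, so $S$ acts transitively on $G/H\Gamma$ with isotropy $S \cap H\Gamma$ at the base point, giving the identification $G/H\Gamma = S/(S \cap H\Gamma)$. The fiber is $H/(H \cap \Gamma)$, a connected complex solvmanifold, and the fibration is proper whenever $S \neq \{e\}$ since $H\Gamma$ is closed in $G$.
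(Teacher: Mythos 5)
Your opening moves match the paper's starting point: the identity component of ${\rm cl}_G(R\cdot\Gamma)$ is the right object, it is normalized by $\Gamma$, and its product with $\Gamma$ is closed. The solvability of that identity component, which you propose to establish by a ``careful analysis of rescaled tangent directions,'' is exactly Auslander's theorem on radicals of discrete subgroups, proved via the Zassenhaus Lemma; the paper simply cites \cite{Aus63} and \cite[Theorem 2]{Gi81} for this, so your sketch, while not a proof, at least points at a known result and is not where the argument breaks down.

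The genuine gap is in what you call the ``secondary subtlety.'' The group $M$ you construct is only a real subgroup, and the proposition requires a \emph{complex} solvable $H$ with $H\cdot\Gamma$ closed. Once you replace $M$ by the connected complex subgroup with Lie algebra $\mathfrak m + i\mathfrak m$ (or by the smallest closed connected complex subgroup containing it), the closedness of $H\cdot\Gamma$ is lost, and it does not ``follow by a direct analysis'': the closure ${\rm cl}_G(H\cdot\Gamma)$ can have a strictly larger identity component, which need not be solvable a priori and must itself be tamed. This is the heart of the paper's proof, and it is handled by an iteration you have no analogue of: if $H_1\cdot\Gamma$ is not closed, pass to the normalizer $N_1 := N_G(H_1)$, observe that $\Gamma$ normalizes the radical $R_1$ of $N_1^0$ and that $H_1 \subset R_1$, apply the Zassenhaus Lemma \emph{inside} $N_1^0$ (using that $N_1^0\cdot\Gamma = N_1$ is closed) to see that the identity component of ${\rm cl}_G(R_1\cdot\Gamma)$ is again solvable and $\Gamma$--normalized, then take the smallest closed connected complex subgroup $H_2$ containing it. Since $\dim H_2 > \dim H_1$, the process terminates after finitely many steps with an $H$ for which $H\cdot\Gamma$ is closed. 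Without this normalizer--radical induction your construction stalls at the first failure of closedness, so the central claim of the proposition is not established.
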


\begin{proof}  
If the $R$--orbits themselves are closed, set $H := R$.    
If not, then the Zassenhaus Lemma \cite{Aus63} is used in \cite[Theorem 2]{Gi81} to show the existence of    
a minimal connected complex solvable subgroup $H_1 \subset G$ normalized by $\Gamma$ 
and containing the identity component of ${\rm cl}_G(R\cdot\Gamma)$.   
If $H_1\cdot\Gamma$ is closed in $G$, one has the desired result with $H := H_1$.    

\medskip 
Otherwise, let $N_1 := N_G(H_1)$. 
Since $\Gamma$ normalizes the identity component $N_1^0$ of $N_1$, it also normalizes its radical $R_1$.    
Now $H_1 \subset R_1$, because $H_1$ is solvable and normal in $N_1$.    
Either $R_1\cdot\Gamma$ is closed in $G$, or the identity component of ${\rm cl}_G(R_1\cdot\Gamma)$ 
is contained in $N_1^0$.    
Applying the Zassenhaus Lemma again (in $N_1^0$) we see that this identity component is solvable 
and normalized by $\Gamma$,  since $N_1^0\cdot \Gamma  = N_1$ is closed in $G$.    
Let $H_2$ be the smallest connected closed complex subgroup of $G$ that contains 
this identity component.   
Then $H_2$ is solvable, normalized by $\Gamma$ and its dimension is strictly greater than the dimension of $H_1$.   
A finite number of steps yields the desired connected complex solvable group $H$.     
\end{proof}   

\subsection{Existence of a tower}   

\begin{lemma}  \label{ind1}  
Suppose $\Gamma$ is a cocompact, discrete subgroup of a (positive dimensional) connected solvable Lie group $L$ such that  
$L^{\mathbb C}/\Gamma$ is Stein, where $L^{\mathbb C}$ is the complexification of $L$.  
Then there exists a fibration by the center $Z$ of the nilradical of $L^{\mathbb C}$   
\[   
          L^{\mathbb C}/\Gamma  \; \stackrel{(\mathbb C^*)^k}{\longrightarrow} \; L^{\mathbb C}/Z\cdot\Gamma 
\]    
that has $(\mathbb C^*)^k$ as fiber with $k>0$.      
\end{lemma}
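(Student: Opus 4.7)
The plan is to compute the $Z$--orbit $Z \cdot e\Gamma/\Gamma \cong Z/(Z\cap\Gamma)$ through the base point directly by descending to the underlying real solvable Lie group $L$, applying the classical Mostow--Malcev structure theory of solvmanifolds and nilmanifolds, and then complexifying the resulting data. Let $N$ denote the (real) nilradical of $L$. Since $\Gamma$ is cocompact in $L$, Mostow's structure theorem for solvmanifolds gives that $N \cdot \Gamma$ is closed in $L$ and that $\Gamma \cap N$ is a cocompact lattice in $N$. Applying Malcev's theorem on lattices in simply connected nilpotent groups to the resulting nilmanifold, the further intersection $\Gamma \cap Z(N)$ is a cocompact lattice in the simply connected abelian group $Z(N) \cong \mathbb{R}^k$, where I set $k := \dim_{\mathbb R} Z(N)$; in particular $Z(N) \cdot \Gamma$ is closed in $L$.

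Next I identify the fiber. The nilradical of $L^{\mathbb{C}}$ is $N^{\mathbb{C}}$, its center is $Z = Z(N)^{\mathbb{C}} \cong \mathbb{C}^k$, and since $L$ is totally real in $L^{\mathbb{C}}$ (because $\mathfrak{l} \cap i\mathfrak{l} = 0$ in $\mathfrak{l}^{\mathbb{C}}$), one has $L \cap Z = Z(N)$, hence $\Gamma \cap Z = \Gamma \cap Z(N)$. This is a lattice of full real rank $k$ sitting inside the totally real subspace $Z(N) \subset Z$, so the exponential map identifies $Z/(Z \cap \Gamma) \cong \mathbb{C}^k/\mathbb{Z}^k \cong (\mathbb{C}^*)^k$. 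That $k > 0$ follows because $L$ has positive dimension and is solvable: either $L$ is abelian, in which case $N = Z(N) = L$, or $[L,L]$ is a positive dimensional nilpotent subgroup of $N$, so $N$ is positive dimensional nilpotent and therefore has nontrivial center.

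The one remaining point is that $Z \cdot \Gamma$ must actually be closed in $L^{\mathbb{C}}$ for the stated map to be a genuine holomorphic fibration rather than merely an immersion. I would deduce this from the Mostow closedness of $Z(N) \cdot \Gamma$ in $L$ by observing that $L^{\mathbb{C}}/Z$ is the universal complexification of $L/Z(N)$ and that the discrete image of $\Gamma$ in $L/Z(N)$ remains discrete under the totally real inclusion $L/Z(N) \hookrightarrow L^{\mathbb{C}}/Z$. The Steinness hypothesis on $L^{\mathbb{C}}/\Gamma$ should enter precisely here, ensuring that no toric collapsing occurs in the complexification and that the $Z$--orbit embeds as a closed Stein complex submanifold isomorphic to $(\mathbb{C}^*)^k$ rather than something containing compact toric factors. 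I expect the main obstacle to be cleanly marshalling these complexification and closure arguments into a closed subgroup statement in $L^{\mathbb{C}}$; the structural heart of the lemma is really the Mostow--Malcev input combined with the totally real relationship between $\Gamma \subset L$ and $Z(N) \subset Z$.
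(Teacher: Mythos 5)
Your overall architecture (Mostow for the real nilradical, Mal\-cev/Matsushima for the lattice in the center, then complexify) is close in spirit to the paper's, and your identification of the fiber as $(\mathbb C^*)^k$ and of $k>0$ is essentially sound. But there is a genuine gap at the step you yourself flag: the closedness of $Z\cdot\Gamma$ in $L^{\mathbb C}$. Your proposed derivation --- that the discrete image of $\Gamma$ in $L/Z(N)$ ``remains discrete under the totally real inclusion $L/Z(N)\hookrightarrow L^{\mathbb C}/Z$'' --- does not work as stated: a discrete subgroup of an immersed real form need not stay discrete in the ambient complex group unless that real form is closed there, and neither $L$ in $L^{\mathbb C}$ nor $L/Z(N)$ in $L^{\mathbb C}/Z$ is closed a priori (this is exactly the kind of degeneration that produces Cousin groups). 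The Steinness hypothesis cannot be invoked merely ``after the fact'' to rule out toric collapsing, because without closedness of $Z\cdot\Gamma$ there is no fibration and no closed $Z$--orbit to which the maximum principle applies.

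The paper closes this gap with a specific known theorem you are missing. It works with $N$, the nilradical of $L^{\mathbb C}$ (not the real nilradical), observes via Mostow that the $N$--orbits in $L^{\mathbb C}/\Gamma$ are closed, so that $N/N\cap\Gamma$ is a closed complex submanifold of a Stein manifold and hence a \emph{Stein complex nilmanifold}; it then cites the theorem of Barth--Otte \cite{BO69} (see also \cite[Theorem 4]{GH78}) asserting that for such a nilmanifold the orbit of the center is closed, i.e.\ $Z\cdot\Gamma$ is closed. This is precisely where the Steinness hypothesis enters, and it is the structural heart of the lemma rather than the Mostow--Mal\-cev bookkeeping. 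The identification of the resulting fiber as $(\mathbb C^*)^k$ is then quoted from \cite[Theorem 7]{GH78}, which also absorbs the small checks you elide (e.g.\ that $\Gamma\cap Z$ could a priori exceed $\Gamma\cap Z(N_0)$, and that the nilradical and its center behave well under complexification). To repair your argument, replace the ``remains discrete'' claim by the Barth--Otte theorem applied to the Stein nilmanifold $N/N\cap\Gamma$.
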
   

\begin{proof} 
Let $N_0$ be the nilradical of $L$ and $N$ the nilradical of $L^{\mathbb C}$.   
Then $N_0$ has closed orbits in $L/\Gamma$ by a theorem of Mostow \cite{Mos54} or \cite{Mos71}    
and thus $N$ has closed orbits in $L^{\mathbb C}/\Gamma$.     
Let $Z$ be the center of $N$ (resp. $Z_0$ of $N_0$).   
Incidentally, note that $\dim Z > 0$ \cite{Mat51}.  
Since $N/N\cap\Gamma$ is a closed complex submanifold of the Stein manifold $L^{\mathbb C}/\Gamma$,   
we see that $N/N\cap\Gamma$ is Stein.      
Hence the subgroup $Z\cdot\Gamma$ is closed by a result of Barth--Otte \cite{BO69}; see also \cite[Theorem 4]{GH78}.    
Therefore, we have the fibration 
\[   
        L^{\mathbb C}/\Gamma \; \longrightarrow \; L^{\mathbb C}/Z\cdot\Gamma  .   
\]    
It follows that the fibers of the fibration above are $(\mathbb C^*)^k$--orbits for some positive integer $k$; see \cite[Theorem 7]{GH78}.          
\end{proof}  

\begin{definition}  
A $\mathbb C^*$ power tower of length {\it one} is simply the manifold $(\mathbb C^*)^p$ for some positive integer $p$.   
For any integer $n>1$ a $\mathbb C^*$ power tower of length $n$ is a $(\mathbb C^*)^k$--bundle 
over a $\mathbb C^*$ power tower of length $n-1$.      
\end{definition}   

\begin{remark} 
Repeated application of Lemma \ref{ind1} shows that the space $L^{\mathbb C}/\Gamma$ is a $\mathbb C^*$ power tower of length $n$ 
for some positive integer $n$.  
\end{remark}


\section{Proof of the main result}   

\subsection{Formulation of the strategy of the proof}  
\begin{remark} \label{connected}  
There is a technical point that can arise in our construction,      
in that an intermediary fibration whose fiber is not connected might be involved.      
This is handled by a type of {\em Stein factorization for homogeneous fibrations}.      
Suppose $G$ is a connected Lie group that contains a closed subgroup $I$ containing a closed subgroup $H$.  
Let $\widetilde{I}$ be those connected components of $I$ that meet $H$.  
Then $\widetilde{I}$ is a closed subgroup of $G$ containing $H$ and  the fibration $G/H \to G/\widetilde{I}$ has connected fiber $\widetilde{I}/H$.    
\end{remark}

\noindent 
{\sc Strategy of the Proof:}

\medskip\noindent   
Assume $G/H$ is a pseudoconvex homogeneous manifold that is not Stein.  
In order to prove the theorem 
we construct a closed complex subgroup $I$ of $G$ containing $H$ with $\dim I > \dim H$ and $I/H$ connected,   
possibly with the aid of Remark \ref{connected}, such that    
\begin{enumerate}  
\item [] (i)  ${\mathcal O}(I/H) = \mathbb C$  and    
\item [] (ii) every continuous plurisubharmonic  
exhaustion function on $G/H$ induces a continuous plurisubharmonic exhaustion function on $G/I$.   
\end{enumerate}  
Then if $G/H \to G/J$ is the holomorphic reduction, $I$ is a subgroup of $J$ because of (i), $G/I$ is pseudoconvex because of (ii) 
and is either Stein (then $I=J$ and we are done) or not Stein and one applies the construction until 
one does reach the holomorphic reduction; see also the last paragraph of the proof below for more details.      

\begin{remark} 
Here is a list of some complex homogeneous manifolds that do satisfy (ii), 
whenever they occur as the fiber of a homogeneous fibration $G/H \to G/I$.  
For (2) and (3), the basic tool to prove this is Kiselman's Minimum Principle \cite[Theorem 2.2]{Kis78}:  
\begin{enumerate}  
\item  compact complex homogeneous manifolds,       
\item  Cousin groups, see \cite[Lemma 6.1 (1)]{GMO13},   
\item  the fibers of certain $\mathbb C^*$ power towers provided the exhaustion function is 
constant on the underlying circle power tower, see below.        
\end{enumerate} 
\end{remark}   

\subsection{The proof itself}    

\begin{proof}  
Let $G/H$ be a non--Stein pseudoconvex homogeneous manifold.      
Then its  Hirschowitz annihilator  $\mathcal A$ satisfies $\dim \mathcal A > \dim H$.    
We define subgroups by setting      
\begin{enumerate} 
\item $G_{1} := N_{G}(\mathcal A)$; note that $H$ is a subgroup of $N_G(\mathcal A)$ by \cite[p. 42]{GMO13} and   
\item $G_{2} := N_{G_{1}}(H^0)$.  
\end{enumerate}  
Note that in (1), because the $\mathcal A$--orbits are positive dimensional and relatively compact, the fibration 
$G/H \to G/N_{G}(\mathcal A)$ is not a covering and its fiber is not Stein.   
Also because of the fact that  $\mathcal A$ is normal in $G_1$, if the fibration $G_1/H \to G_1/N_{G_{1}}(H^0)$ 
is a covering (resp. has a Stein fiber), then the $\mathcal A$--orbits in $G_1/H$ are flag manifolds by Lemma \ref{flag3} 
(resp. by Corollary \ref{flag2}).  
We are done, since setting $I := \mathcal A \cdot H$ yields a fibration $G/H \to G/I$ with $I/H$ compact and thus satisfying (i) and (ii).  
Thus we need only consider the setting where $G_2/H$ is a non--Stein pseudoconvex homogeneous manifold.    
If the Hirschowitz annihilator $\mathcal A_2$ for $G_2/H$ is not normal in $G_2$,   
we apply (1) again setting $G_3 := N_{G_{2}}(\mathcal A_2)$.  
So $G_3 = N_{G_3}(\mathcal A_2) < N_{G_1}(H^0)$, the latter because $G_3$ is a subgroup of $G_2$.       
Now set $\widehat{G} := G_3/H^0, \widehat{\mathcal A} := \mathcal A_2/H^0$ and $\Gamma := H/H^0$ and   
note that $\widehat{\mathcal A} \lhd \widehat{G}$ and has positive dimensional orbits in $\widehat{G}/\Gamma$.   
Furthermore, we may assume that $\widehat{G}$ is not solvable (resp. semisimple) because of 
\cite[Theorem 8.5]{GMO13} (resp. \cite[Theorem 7.1]{GMO13}), since each of these settings directly 
yields the desired subgroup $I$ satisfying (i) and (ii); in the first case it arises from a fibration by a Cousin group and in the second by a 
compact complex manifold.      
Hence we reduce to the case where $\widehat{G}$ is a mixed group.   
The rest of the construction below produces a subgroup $\widehat{I}$ of $\widehat{G}$ containing $\Gamma$   
with $\widehat{I}/\Gamma$ satisfying (i) and (ii).  
Taking the preimage $I$ of $\widehat{I}$ via the quotient homomorphism  $G_3 \to G_3/H^0$ gives us the desired fibration $G/H \to G/I$.  
For notational convenience we suppress the hats from now on and write $G$ instead of $\widehat{G}$, etc.    
 
\medskip  
Since $\mathcal A \lhd G$, we may apply Lemma \ref{closure} and Remark \ref{closurermk}.       
Set $L := cl_{G}(\mathcal A\cdot \Gamma)$ and let $\widetilde{L}$ be the smallest connected, closed, complex subgroup of $G$ that contains $L$.           
Since the $\mathcal A$--orbits are relatively compact, $L/\Gamma$ is compact.    
As a consequence, ${\mathcal O}(\widetilde{L}/\Gamma) = \mathbb C$ by the maximum and identity principles.           
We claim that we may further reduce to the setting where the group $L$ has a positive dimensional radical $R_L$, a fact that we will later use.            
If $\widetilde{L}$ is semisimple, then one has the fibration $G/\Gamma \to G/\widetilde{L}$ and  
$\widetilde{L}/\Gamma$ is pseudoconvex and thus holomorphically convex \cite[Theorem 7.1]{GMO13}.  
This implies $\widetilde{L}/\Gamma$ is compact and we are again done with $I := \widetilde{L}$.                
In particular, in the rest of the proof we assume that $\dim R_L > 0$.    

\medskip   
We need to show that $\widetilde{L}/\Gamma$ satisfies (ii) in this setting.    
By Proposition \ref{Karl} there is a fibration   
\[   
       \widetilde{L}/\Gamma \; \longrightarrow \;  \widetilde{L}/ \widetilde{H}\cdot\Gamma     
\]    
with $ \widetilde{H}\cdot\Gamma$ closed in $ \widetilde{L}$, where $ \widetilde{H}$ is a connected, solvable, complex Lie group   
that contains the radical $R_{ \widetilde{L}}$ of $\widetilde{L}$ and is normalized by $\Gamma$.    
Now we claim that we may further reduce to the setting where  
the fiber $ \widetilde{H}\cdot\Gamma/\Gamma$ of the above fibration is Stein.  
Otherwise, $ \widetilde{H}\cdot\Gamma/\Gamma$ would be a connected, pseudoconvex solvmanifold that is not Stein  
and there would exist a closed complex subgroup $I$ of $ \widetilde{H}\cdot\Gamma$ containing $\Gamma$ with $I/\Gamma$  
a positive dimensional Cousin group \cite[Theorem 8.5]{GMO13}.   
Clearly, $I/\Gamma$ satisfies conditions (i) and (ii).     
So from here on we may assume that $ \widetilde{H}\cdot\Gamma/\Gamma$ is Stein.  

\medskip  
Now in order to finish the proof that $\widetilde{L}/\Gamma$ satisfies (ii) we have to analyze the structure of (at least part of)    
the intersection of the compact orbit $L/\Gamma$ with the Stein orbit $\widetilde{H}/\widetilde{H}\cap\Gamma$.      
Consider the connected, real, solvable group $B := (L \cap \widetilde{H})^0$.  
Since $R_L \subset R_{\widetilde{L}}\subset\widetilde{H}$ by Lemma \ref{normal} and $R_L \subset L$, it follows that 
$R_L \subset B$ and thus $\dim B >0$.   
Let $B^{\mathbb C}$ be its complexification.      
We have $\Gamma \subset N_{\widetilde{L}}(B^{\mathbb C})$,   
since $\Gamma \subset L$ and $\Gamma$ normalizes $\widetilde{H}$ by Proposition \ref{Karl},   
and thus we may consider the fibration $\widetilde{L}/\Gamma \to \widetilde{L}/N_{\widetilde{L}}(B^{\mathbb C})$.   
Now we may assume that $N_{\widetilde{L}}(B^{\mathbb C})/\Gamma$ is not Stein, 
for, otherwise, $\mathcal A$ would have compact orbits by Lemma \ref{flag2}, a case that could be easily handled, as above.      
The Hirschowitz annihilator for the space $N_{\widetilde{L}}(B^{\mathbb C})/\Gamma$ need not be normal in $\widetilde{L}$.     
So we begin the proof again with the space $N_{\widetilde{L}}(B^{\mathbb C})/\Gamma$  and run   
at most a finite number of times (because $\dim G/H <\infty$) through all of its steps until   
the only situation demanding further attention occurs when $N_{\widetilde{L}}(B^{\mathbb C})=\widetilde{L}$.  
Hence we may assume that $B^{\mathbb C}$ is a connected, normal, solvable subgroup of ${\widetilde{L}}$.  
As a consequence, $B^{\mathbb C} \cap L$ is normal in $L$ and this implies that  $B \subset R_L$.  
But $R_L \subset B$ and so $B=R_L$.      
We claim that the $R_L$--orbits in $\widetilde{L}/\Gamma$ are compact.    
As noted above, $R_L \subset R_{\widetilde{L}}$ by Lemma \ref{normal}.  
So by the construction of $\widetilde{H}$, one has 
\[   
          (R_L\cdot\Gamma)^0 \; \subset \; {\rm cl}_{\widetilde{L}}(R_L\cdot\Gamma)^0 \; \subset \; 
          ((L\cap\widetilde{H})\cdot\Gamma)^0 \; = \; (R_L\cdot\Gamma)^0 ,      
\]   
where, as usual, the superscript denotes the connected component of the identity.    
It follows that the $R_L$--orbits are closed and since the $L$--orbits are compact, the $R_L$--orbits are thus also compact.   
Note that if ${\mathfrak r}_L \cap i {\mathfrak r}_L \not= (0)$, then the connected complex Lie group corresponding to 
the complex ideal ${\mathfrak r}_L \cap i {\mathfrak r}_L $ has positive dimensional orbits in the compact $R_L$--orbits in the $\widetilde{H}$--orbits.  
By the maximum principle this contradicts our reduction to the setting where the $\widetilde{H}$--orbits are Stein.  
Thus one must have ${\mathfrak r}_L \cap i {\mathfrak r}_L = (0)$ and the $R_L$--orbits in $\widetilde{L}/\Gamma$ are totally real.  
Now consider the complexification $R_L^{\mathbb C}$ of $R_L$ that has 
Lie algebra ${\mathfrak r}_L^{\mathbb C} :=  {\mathfrak r}_L \oplus i {\mathfrak r}_L$.   
According to a conjecture of Mostow \cite{Mos54} that was proved by Auslander--Tolimieri \cite{AT69} and Mostow \cite{Mos71} every solvmanifold has the 
structure of a (real) vector bundle over a compact solvmanifold.   
In general, the compact base is homogeneous with respect to a group that is not a subgroup of the original solvable Lie group acting on the manifold,   
but rather lies in a certain algebraic hull of that group.    
We claim that our setting is special in that $R_L \subset R_L^{\mathbb C}$ can be taken to be that subgroup.      
Suppose 
\[  
         R_L^{\mathbb C}/R_L^{\mathbb C}\cap\Gamma \; \stackrel{\mathbb R^k}{\longrightarrow} \; M  
\]  
is the vector bundle given by Mostow's conjecture.  
Since $R_L^{\mathbb C}/R_L^{\mathbb C}\cap\Gamma$ is Stein, it follows from Serre's homology condition \cite{Ser53} that   
$ \dim_{\mathbb C} R_L^{\mathbb C} \ge  \dim_{\mathbb R} M \ge  \dim_{\mathbb R} R_L$.      
On the other hand  we have $\dim_{\mathbb R} R_L =  \dim_{\mathbb C} R_L^{\mathbb C}$, as noted above.      
As a consequence, the compact base $M$ is diffeomorphic to $R_L/R_L\cap\Gamma$ and $R_L^{\mathbb C}\cap\Gamma = R_L\cap\Gamma$.  
So the $R_L^{\mathbb C}$--orbits are closed in $\widetilde{L}/\Gamma$.  
We may now apply Lemma \ref{ind1} to the triple $(R_L \cap\Gamma, R_L, R_L^{\mathbb C})$.   

\medskip   
Now we have the fibration $\widetilde{L}/\Gamma \to \widetilde{L}/R_L^{\mathbb C}\cdot\Gamma$.    
Any continuous plurisubharmonic exhaustion function $\varphi$ on $G/\Gamma$ is constant on each of the $\mathcal A$--orbits,  
since these orbits lie in the level sets of the exhaustion function.       
By continuity $\varphi$ is then constant on the orbits of the closure $L$ and thus on the orbits of its radical $R_L$.  
The $(S^1)^k$--orbits that arise in fibration   
\[   
         R_ L^{\mathbb C}/(R_L^{\mathbb C}\cap\Gamma)  \;   
         \stackrel{(\mathbb C^*)^k}{\longrightarrow} \; R_L^{\mathbb C}/Z\cdot(R_L^{\mathbb C}\cap\Gamma)   
\]    
given by Lemma \ref{ind1} are part of the $R_L$--orbits.  
Thus $\varphi$ is constant on the $(S^1)^k$--orbits.                    
As in  \cite[Lemma 6.1 (2)]{GMO13} one can apply Kiselman's minimum principle \cite[Theorem 2.2]{Kis78} 
and push $\varphi$ down to $R_L^{\mathbb C}/Z\cdot(R_L^{\mathbb C}\cap\Gamma)$.    
As a consequence $ \widetilde{L}/R_L^{\mathbb C}\cdot\Gamma$ is pseudoconvex.  
We continue in this fashion until a maximal semisimple group is acting transitively on the resulting quotient space $Z$.  
But then $Z$ is pseudoconvex and thus holomorphically convex \cite[Theorem 7.1]{GMO13}.  
Since $\mathcal{O} (\widetilde{L}/\Gamma) = \mathbb C$, it follows that $Z$ is compact and 
we see that $\widetilde{L}/\Gamma$ satisfies (ii).  
This completes the proof that $\widetilde{L}/\Gamma$ satisfies (i) and (ii).       

\medskip  
In order to complete the proof of the Theorem, we assume that $G/H$ is a pseudoconvex homogeneous manifold 
that is not Stein with ${\mathcal O}(G/H) \not= \mathbb C$ and we let $G/H \to G/J$ be its holomorphic reduction.  
Note that $J/H$ cannot be Stein, since this would imply that $G/H$ itself would be Stein by Lemma \ref{Hirsch}.  
We now choose the \emph{maximal} $I$ given by the construction above.       
Then ${\mathcal O}(I/H) = \mathbb C$.  
Moreover, $G/I$ is pseudoconvex due to the fact that $I$ satisfies (ii).  
If $G/I$ were not Stein, then there would exist a subgroup $I_1$ with $\dim I_1 > \dim I$ 
with $I_1/I$ satisfying conditions (i) and (ii).  
But this would imply that $I_1/H$ also satisfies these two conditions, contradicting the maximality of $I$.      
Consequently, $I = J$ and shows that the holomorphic reduction has the desired properties.  
\end{proof}


\begin{thebibliography}{Prop}   

\bibitem{Akh95}  
   Akhiezer, D. N.: Lie group actions in complex analysis. Aspects of Mathematics, E27. Friedr. Vieweg \& Sohn, Braunschweig, 1995. viii+201 pp. 

\bibitem{Aus63}  Auslander, L.: \emph{On radicals of discrete subgroups of Lie groups}, 
   Amer. J. Math. \textbf{85}, 145 -- 150 (1963).    
   
\bibitem{AT69}    
   Auslander, L.,  R. Tolimieri, R.:  \emph{On a conjecture of G. D. Mostow and the structure
   solvmanifolds},  Bull. Amer. Math. Soc.  \textbf{75}, 1330 -- 1333 (1969).   
   
\bibitem{BO69} 
    Barth, W., Otte, M.: \emph{\"{U}ber fast-uniforme Untergruppen komplexer Liegruppen 
    und aufl\"{o}sbare komplexe Mannigfaltigkeiten.}   Comment. Math. Helv. \textbf{44},  269 -- 281 (1969). 

\bibitem{BO73}  
    Barth, W., Otte, M.:  \emph{Invariante holomorphe Funktionen auf reduktiven Liegruppen.}   
    Math. Ann. \textbf{201}, 97 -- 112 (1973).  
     
\bibitem{Chev51}       
    Chevalley, C.:      Th\'eorie des groupes de Lie. Tome II. Groupes alg\'ebriques.    
    Actualit\'es Sci. Ind. no. 1152. Hermann \& Cie., Paris, 1951.    
    
\bibitem{CL85}  
    Coeur\'e G., Loeb,  J.-J.:  \emph{A counterexample to the Serre problem with a bounded domain of $\mathbb C^2$ as fiber.}      
    Ann. of Math. (2) \textbf{122}, no. 2, 329 -- 334 (1985).    

\bibitem{Gi81}  
    Gilligan, B.:  \emph{Ends of complex homogeneous manifolds having nonconstant holomorphic functions.} 
    Arch. Math. (Basel) \textbf{37}, no. 6, 544 -- 555 (1981).  

\bibitem{GH78}
   Gilligan, B.,  Huckleberry, A. T.:  \emph{ On non-compact complex nil-manifolds.} Math. Ann. \textbf{238}, no. 1, 39 -- 49 (1978). 
   
\bibitem{GMO13} 
    Gilligan, B., Miebach, C., Oeljeklaus, K.:      
   \emph{Pseudoconvex domains spread over complex homogeneous manifolds,}  Manuscripta Math. \textbf{142}, 35 -- 59 (2013).   
   
\bibitem{Gr58}
    Grauert, H.: \emph{On Levi's problem and the imbedding of real--analytice manifolds.}  Ann. of Math. 
    \textbf{68} (1958), 460 -- 472.   
 
\bibitem{Hir75} 
   Hirschowitz, A.: \emph{Le probl\`{e}me de L\'{e}vi pour les espaces homog\`{e}nes}.  Bull. Soc. Math. France 
   \textbf{103}(2), 191 -- 201 (1975).     
  
\bibitem{HO81} 
  Huckleberry, A. T.,  Oeljeklaus, E.: \emph{Homogeneous spaces from a complex analytic view--point.} 
  In: Manifolds and Lie Groups (Notre Dame, Ind., 1980) Progr. Math., vol. 14, pp. 159 -- 186, Birkh\"{a}user, Boston, 1981. 
  Edited by J. Hano, A. Morimoto, S. Murakami, K. Okamoto and H. Ozeki.   
    
\bibitem{HO86}   
    Huckleberry, A. T.,  Oeljeklaus, E.:   \emph{On holomorphically separable complex solv-manifolds.}  
    Ann. Inst. Fourier (Grenoble) \textbf{36}, no. 3, 57 -- 65 (1986).
 
\bibitem{Jac62} 
   Jacobson, N.:  Lie algebras. Interscience Tracts in Pure and Applied Mathematics, 
   No. 10 Interscience Publishers (a division of John Wiley \& Sons), New York-London 1962.    
  
\bibitem{Kis78} 
   Kiselman, C. O.:   \emph{The partial Legendre transformation for plurisubharmonic functions}. 
   Invent. Math. \textbf{49},  137 -- 148 (1978).      
   
\bibitem{Mat51}  
   Matsushima, Y.: \emph{On the discrete subgroups and homogeneous spaces of nilpotent Lie groups.}  
    Nagoya Math. J. \textbf{2}, 95 -- 110 (1951). 
   
\bibitem{Mat60}  
   Matsushima, Y.:   \emph{Espaces homog\`{e}nes de Stein des groupes de Lie complexes.} Nagoya Math. J, \textbf{16} (1960),  205 -- 218.
   
\bibitem{MM60}  
    Matsushima, Y., Morimoto, A.: \emph{Sur certains espaces fibr\'{e}s holomorphes sur une vari\'{e}t\'{e} de Stein.}   
    Bull. Soc. Math. France \textbf{88} (1960), 137 -- 155.        
   
\bibitem{Mos54}  
     Mostow, G. D.:  \emph{Factor spaces of solvable groups.}  Ann. of Math. (2) \textbf{60}, 1 -- 27 (1954).   
     
\bibitem{Mos71}    
     Mostow, G. D.:  \emph{Some applications of representative functions to solvmanifolds}, 
     Amer. J. Math. \textbf{93}, 11 -- 32 (1971). 

\bibitem{Nar62} 
   Narasimhan, R.: \emph{The Levi problem for complex spaces.} I, II, Math. Ann. \textbf{142} (1961) 355 -- 365; ibid. 
   \textbf{146} (1962), 195 -- 216.   

\bibitem{Nar63} 
   Narasimhan, R.: The Levi problem in the theory of functions of several complex variables. 
   1963 Proc. Internat. Congr. Mathematicians (Stockholm, 1962) pp. 385 -- 388.   

\bibitem{OR88}   
     Oeljeklaus, K.,  Richthofer, W.:  \emph{On the structure of complex solvmanifolds.}  
     J. Differential Geom. \textbf{27}, 399 - 421 (1988).  
    
\bibitem{On60}  
    Onishchik, A.:  \emph{Complex hulls of compact homogeneous spaces.}  Dokl. Akad. Nauk SSSR \textbf{130} 726 -- 729 (Russian);   
    translated as Soviet Math. Dokl. \textbf{1} (1960),  88 -- 91.
    
\bibitem{Rem56}  
   Remmert, R.: \emph{Sur les espaces analytiques holomorphiquement s\'eparables et holomorphiquement convexes.}      
   C. R. Acad. Sci. Paris \textbf{243}, 118 -- 121 (1956).
    
\bibitem{Ser53}  
    Serre, J.-P.:  \emph{Quelques probl\`emes globaux relatifs aux vari\'et\'es de Stein.}    
    Colloque sur les fonctions de plusieurs variables, tenu \`a Bruxelles, 1953, pp. 57 -- 68. Georges Thone, Li\`ege; Masson \& Cie, Paris, 1953.     
    
\bibitem{Tits62} 
   Tits, J.:  \emph{Espaces homog\`enes complexes compacts.}   
   Comment. Math. Helv. \textbf{37}, 111 -- 120 (1962/1963).

\end{thebibliography}
\end{document}